\documentclass{article}
\usepackage[british]{babel}
\usepackage{microtype}
\usepackage{fontenc}[T1]
\usepackage{amssymb}
\usepackage{array}
\usepackage{latexsym}
\usepackage{enumerate}
\usepackage{amsmath}
\usepackage{amsfonts}
\usepackage{amsthm}
\usepackage{geometry}
\usepackage{microtype}

\newcommand{\cC}{\mathcal C}
\newcommand{\cL}{\mathcal L}
\newcommand{\cP}{\mathcal P}
\newcommand{\cH}{\mathcal H}
\newcommand{\cU}{\mathcal U}

\newcommand{\GF}{\operatorname{GF}}
\newcommand{\AG}{\operatorname{AG}}
\newcommand{\PG}{\operatorname{PG}}
\newcommand{\PGL}{\operatorname{PGL}}
\newcommand{\PGU}{\operatorname{PGU}}
\newcommand{\PSU}{\operatorname{PSU}}

\theoremstyle{plain}
\newtheorem{prop}{Proposition}[section]
\newtheorem{theorem}[prop]{Theorem}

\newtheorem{lemma}[prop]{Lemma}
\theoremstyle{definition}

\title{Unitals in $\PG(2,q^2)$ with a large 2-point stabiliser}
\author{L.Giuzzi and G.Korchm\'aros}
\begin{document}
\maketitle
\begin{abstract}
Let $\cU$ be a unital embedded in the Desarguesian projective plane
$\PG(2,q^2)$. Write $M$ for the subgroup of $\PGL(3,q^2)$ which preserves
$\cU$. We show that $\cU$ is classical if and only if $\cU$ has two
distinct points $P,Q$ for which the stabiliser $G=M_{P,Q}$ has order
$q^2-1$.
\end{abstract}
\section{Introduction}
In the Desarguesian projective plane $\PG(2,q^2)$, a \emph{unital} is
defined to be a set of $q^3+1$ points containing either $1$ or $q+1$
points from each line of $\PG(2,q^2)$.  Observe that each unital has a
unique $1$-secant at each of its points.
The idea of a unital arises from the combinatorial properties of
the non-degenerate unitary
polarity $\pi$ of $\PG(2,q^2)$. The set of absolute points of $\pi$ is
indeed a unital, called the {\em classical} or {\em Hermitian}
unital. Therefore, the projective group preserving the classical
unital is isomorphic to $\PGU(3,q)$ and acts on
its points as $\PGU(3,q)$ in its natural $2$-transitive permutation
representation. Using the classification of subgroups of
$\PGL(3,q^2)$, Hoffer \cite{hof} proved that a unital is classical if and
only if if is preserved by a collineation group isomorphic to
$\PSU(3,q^2)$. Hoffer's characterisation has been the starting point
for several investigations of unitals in terms of the structure of
their automorphism group,
see \cite{Ba,Bi,BK1,BK2,CEK1,CEK2,EW,Do,LG2,Ka1,Ka2}; see
also the survey \cite[Appendix B]{BE}.
In $\PG(2,q^2)$ with $q$ odd, L.M.~Abatangelo \cite{A1} proved
that a Buekenhout--Metz unital with a cyclic $2$--point stabiliser of
order $q^2-1$ is necessarily classical.
In their talk at Combinatorics 2010, G.~Donati e N.~Durante have
conjectured that Abatangelo's characterisation holds true for any
unital in $\PG(2,q^2)$. In this note, we provide a proof of this
conjecture.

Our notation and terminology are standard, see \cite{BE}, and \cite{hkt}.
We shall assume $q>2$, since all unitals in $\PG(2,4)$
are classical.

\section{Some technical lemmas}
 Let $M$ be the subgroup of $\PGL(3,q^2)$ which preserves a unital
 $\cU$ in $\PG(2,q^2)$. A {\em{$2$-point stabiliser of}} $\cU$ is a
 subgroup of $M$ which fixes two distinct points of $\cU$.
 \begin{lemma}
   \label{l1} Let $\cU$ be a unital in $\PG(2,q^2)$
   with a $2$--point
   stabiliser $G$ of order $q^2-1$. Then, $G$ is cyclic, and there
   exists a projective frame in $\PG(2,q^2)$ such that $G$ is generated
   by a projectivity with matrix representation
   \[ \begin{pmatrix}
     \lambda & 0 & 0 \\
     0 & \mu & 0 \\
     0 & 0 & 1
   \end{pmatrix}, \]
   where $\lambda$ is a primitive element of
   $\GF(q^2)$ and $\mu$ is a primitive element of $\GF(q)$.
\end{lemma}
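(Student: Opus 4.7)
The plan is to exploit the fixed locus of $G$ to diagonalise it, then analyse the two coordinate projections from $G$ to $\GF(q^2)^*$.

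First I would produce three non-collinear fixed points of $G$. Since $G$ preserves $\cU$, it fixes the unique tangents $\ell_P$ and $\ell_Q$ to $\cU$ at $P$ and $Q$, hence their intersection $R := \ell_P \cap \ell_Q$. Because $\ell_Q \cap \cU = \{Q\}$ and $P \in \cU \setminus \{Q\}$ we have $P \notin \ell_Q$, so $R \neq P$; symmetrically $R \neq Q$. Moreover $R \notin PQ$ (otherwise $\ell_P$, containing the distinct points $P, R$ of $PQ$, would coincide with $PQ$, contradicting that $\ell_P$ is tangent while $PQ$ is secant) and $R \notin \cU$ (otherwise $R \in \ell_P \cap \cU = \{P\}$, contradicting $R \neq P$). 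Thus $P, Q, R$ form a triangle, and I pick a projective frame with $R = (1{:}0{:}0)$, $P = (0{:}1{:}0)$, $Q = (0{:}0{:}1)$; in these coordinates $\ell_Q = \{y = 0\}$, $\ell_P = \{z = 0\}$, $PQ = \{x = 0\}$, and every element of $G$ is represented, up to scalar, by a diagonal matrix $\operatorname{diag}(a, b, 1)$.

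Cyclicity will follow from the injectivity of the projection $\pi_1 \colon G \to \GF(q^2)^*$, $\operatorname{diag}(a, b, 1) \mapsto a$. Any non-identity $\sigma \in \ker \pi_1$ is a homology with centre $P \in \cU$ and axis $\ell_Q$. Choose $T \in \cU \setminus (PQ \cup \ell_Q)$, which exists because $|\cU \cap (PQ \cup \ell_Q)| = q + 1 < q^3 + 1 = |\cU|$. The line $PT$ is a secant preserved by $\sigma$, and $PT \cap \ell_Q \notin \cU$ (otherwise $Q \in PT$ would force $T \in PQ$). Hence $\langle \sigma \rangle$ acts freely on the $q$ points of $(\cU \cap PT) \setminus \{P\}$, giving $\operatorname{ord}(\sigma) \mid q$; combined with $\operatorname{ord}(\sigma) \mid q^2 - 1$, this forces $\sigma = 1$, a contradiction. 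So $\pi_1$ is injective and, by cardinality, an isomorphism, making $G$ cyclic of order $q^2 - 1$.

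For the matrix form, pick a primitive element $\lambda \in \GF(q^2)^*$ and let $g \in G$ be the unique element with $\pi_1(g) = \lambda$; then $g$ generates $G$, and I write $g = \operatorname{diag}(\lambda, \mu, 1)$. The action of $g$ on $PQ$ in the affine coordinate $t = z/y$ is $t \mapsto t/\mu$, and it preserves the $(q-1)$-point set $S := (\cU \cap PQ) \setminus \{P, Q\}$. Since $\langle \mu \rangle$ stabilises $S \subseteq \GF(q^2)^*$, $S$ is a union of $\langle \mu \rangle$-cosets, so $\operatorname{ord}(\mu) \mid q - 1$, and in particular $\mu \in \GF(q)^*$. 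For primitivity I consider $K_2 := \ker \pi_2$: it is cyclic of order $(q^2 - 1)/\operatorname{ord}(\mu)$, and its non-identity elements are homologies with centre $R \notin \cU$ and axis the secant $PQ$. A counting argument (requiring $q > 2$) produces $T' \in \cU \setminus PQ$ with $RT'$ a secant meeting $PQ$ outside $\cU$: the forbidden points consist of at most $q(q+1)$ points of $\cU \setminus PQ$ lying on some line $RX$ with $X \in \cU \cap PQ$, plus at most $q + 1$ tangent points from $R$, leaving at least $q^3 - q - (q+1)^2 = (q+1)(q^2 - 2q - 1) > 0$ admissible $T'$ when $q > 2$. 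For such $T'$ and any non-identity $\sigma \in K_2$, the action of $\langle \sigma \rangle$ on $\cU \cap RT'$ (of size $q + 1$ and disjoint from the two fixed points $R$ and $RT' \cap PQ$ of $\sigma$ on $RT'$) is free, so $\operatorname{ord}(\sigma) \mid q + 1$. Therefore $|K_2| \mid q + 1$, i.e.\ $(q^2 - 1)/\operatorname{ord}(\mu) \mid q + 1$; combined with $\operatorname{ord}(\mu) \mid q - 1$, this forces $\operatorname{ord}(\mu) = q - 1$, so $\mu$ is primitive in $\GF(q)$.

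The main obstacle is the second orbit argument: the counting estimate that produces $T'$ outside the bad lines through $R$ is precisely where the hypothesis $q > 2$ enters.
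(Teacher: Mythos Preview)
Your proof is correct and follows essentially the same route as the paper's. Both arguments choose the frame so that the two fixed unital points and the intersection of their tangents form the fundamental triangle, observe that $G$ is then diagonal, prove cyclicity by showing the kernel of the action on a tangent line is trivial (via a free action on $q$ unital points of a secant through the centre), bound $\operatorname{ord}(\mu)\mid q-1$ from the semiregular action on the $q-1$ interior points of the chord $PQ$, bound the order of the kernel $K_2$ by $q+1$ via a free action on a well-chosen secant through the external vertex, and combine the two divisibilities to force $\operatorname{ord}(\mu)=q-1$.

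The only noticeable difference is bookkeeping: you carry out explicitly the existence counts (for $T$ and for $T'$) that the paper leaves implicit, and your count for $T'$ gives the lower bound $(q+1)(q^2-2q-1)$, which is positive only for $q>2$. A slightly sharper count---there are $q^2-q$ secants through $R$, of which at most $q-1$ meet $PQ$ in a point of $\cU$ (the lines $RP=\ell_P$ and $RQ=\ell_Q$ being tangents), leaving at least $(q-1)^2$ good secants---would cover $q=2$ as well, but since the paper already excludes $q=2$ this is immaterial.
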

\begin{proof}
  Let $O,Y_\infty$ be two distinct points of $\cU$ such
  that the stabiliser $G=M_{O,Y_\infty}$ has order $q^2-1$.  Choose a
  projective frame in $\PG(2,q^2)$ so that $O=(0,0,1)$,
  $Y_{\infty}=(0,1,0)$ and the $1$-secants of $\cU$ at those points are
  respectively $\ell_X: X_2=0$ and $\ell_{\infty}: X_3=0$.
  Write
  $X_{\infty}=(1,0,0)$ for the common point of $\ell_X$ and
  $\ell_{\infty}$. Observe that $G$ fixes the vertices of the triangle
  $OX_\infty Y_\infty$.  Therefore, $G$ consists of
  projectivities with diagonal matrix representation.
  Let now $h\in G$ be a
  projectivity that fixes  a further point
  $P\in\ell_X$ apart from $O,X_\infty$.
  Then, $h$ fixes $\ell_X$ point-wise; that is, $h$ is a
  perspectivity with axis $\ell_X$. Since $h$ also fixes  $Y_\infty$, the
  centre of $h$ must  be $Y_\infty$.
  Take any point $R\in \ell_X$ with $R\neq O,X_\infty$.
  Obviously, $h$ preserves the line $r=Y_\infty R$;
  hence, it also preserves $r\cap \cU$.
  Since $r\cap\cU$ comprises $q$
  points other than $R$, the subgroup $H$ generated by $h$ has a
  permutation representation of degree $q$ in which no non-trivial
  permutation fixes a point. As $q=p^r$ for a prime $p$, this implies
  that $p$ divides $|H|$. On the other hand, $h$ is taken
  from a group of order  $q^2-1$.
  Thus, $h$ must be the trivial element in $G$. Therefore,
  $G$ has a faithful action on $\ell_X$ as a $2$-point stabiliser of
  $\PG(1,q^2)$. This proves that $G$ is cyclic. Furthermore, a generator
  $g$ of $G$ has a matrix representation
  \[
  \begin{pmatrix}
    \lambda & 0 & 0 \\
    0 & \mu & 0 \\
    0 & 0 & 1
  \end{pmatrix}\
  \text{\rm with $\lambda$  a primitive element of $\GF(q^2)$}.
  \]
  As $G$ preserves the set $\Delta=\cU\cap OY_{\infty}$, it also
  induces a permutation group $\bar{G}$ on $\Delta$. Since any
  projectivity fixing three points of $OY_{\infty}$ must fix
  $OY_{\infty}$ point-wise, $\bar{G}$ is semiregular on
  $\Delta$.
  Therefore, $|\bar{G}|$ divides $q-1$.
  Let now $F$ be the
  subgroup of $G$ fixing $\Delta$ point-wise. Then, $F$ is a
  perspectivity group with centre $X_\infty$ and axis $\ell_Y:X_1=0$.
  Take any point $R\in \ell_Y$ such that the line $r=RX_\infty$ is
  a $(q+1)$-secant of $\cU$. Then, $r\cap \cU$ is disjoint from $\ell_Y$.
  Hence,
  $F$ has a permutation representation on $r\cap \cU$ in
  which no non-trivial permutation fixes a point. Thus, $|F|$
  divides $q+1$.
  Since $|G|=q^2-1$, we have $|\bar{G}|\leq q-1$ and $|G|=|\bar{G}||F|$.
  This implies
  $|\bar{G}|=q-1$ and $|F|=q+1$. From the former condition, $\mu$ must be
  a primitive element of $\GF(q)$.
\end{proof}
\begin{lemma}
\label{lem2} In $\PG(2,q^2)$, let $\cH_1$ and $\cH_2$ be two
non-degenerate Hermitian curves which have the same tangent at a
common point $P$. Denote by $I(P,\cH_1\cap\cH_2)$ the intersection
multiplicity of $\cH_1$ and $\cH_2$ at $P$ Then,
\begin{equation}
\label{eq0} I(P,\cH_1\cap \cH_2)=q+1.
\end{equation}
\end{lemma}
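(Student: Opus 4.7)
The plan is to translate the question into an order-of-vanishing computation on the local ring of $\cH_1$ at $P$ by aligning coordinates with the common tangent and exploiting the cancellation in $f_1-f_2$.

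I would first choose a projective frame with $P=(0,0,1)$ and the common tangent realised as $\ell:X_2=0$. Then the Hermitian matrix of $\cH_i$, after scaling so that its $(2,3)$-entry equals $1$, is forced (by the conditions $P\in\cH_i$ and $T_P\cH_i=\ell$) to take the shape
\[
 H_i=\begin{pmatrix}\alpha_i & \beta_i & 0\\ \beta_i^q & \gamma_i & 1\\ 0 & 1 & 0\end{pmatrix},\qquad \alpha_i,\gamma_i\in\GF(q),\ \beta_i\in\GF(q^2),
\]
with $\det H_i=-\alpha_i$, so non-degeneracy requires $\alpha_i\in\GF(q)^\ast$. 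Passing to affine coordinates $(x,y)=(X_1/X_3,X_2/X_3)$ gives
\[
 f_i(x,y)=y+y^q+\alpha_ix^{q+1}+\beta_ix^qy+\beta_i^qxy^q+\gamma_iy^{q+1},
\]
and $f_1-f_2$ is a homogeneous form of degree $q+1$ in $x,y$ alone, the $X_3$-contributions having cancelled. Since $(f_1,f_2)=(f_1,f_1-f_2)$, the intersection multiplicity equals $\ord_{\cH_1,P}(f_1-f_2)$.

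I would then apply the Weierstrass preparation theorem to $f_1\in k[[x]][y]$. Because the $y$-order of $f_1$ at $x=0$ equals $1$, one obtains $f_1=u\cdot(y-\phi(x))$ with $u$ a unit and $\phi(x)=-\alpha_1x^{q+1}+O(x^{q+2})$; hence $\mathcal{O}_{\cH_1,P}$ is a DVR with uniformiser $x$ and $\ord(y)=q+1$. The four monomials of $f_1-f_2$ then have $\cH_1$-orders
\[
 \ord(x^{q+1})=q+1,\quad \ord(x^qy)=2q+1,\quad \ord(xy^q)=q^2+q+1,\quad \ord(y^{q+1})=(q+1)^2,
\]
so the $x^{q+1}$-term dominates and $\ord_{\cH_1,P}(f_1-f_2)=q+1$, as required, provided its coefficient $\alpha_1-\alpha_2$ is non-zero.

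The main obstacle is thus to rule out the cancellation $\alpha_1=\alpha_2$. This is an intrinsic condition on the pair $(\cH_1,\cH_2)$, since the stabiliser of the flag $(P,\ell)$ in $\PGL(3,q^2)$ acts on the two $\alpha$'s by a common $\GF(q)^\ast$-multiplier, so it cannot be arranged by a change of frame. I expect the substantive work of the proof to lie in analysing the $\GF(q)$-pencil of Hermitian matrices spanned by $H_1$ and $H_2$, whose last row and column are constrained to share the same zero pattern, and to conclude that $\alpha_1=\alpha_2$ would contradict non-degeneracy of at least one of the two curves.
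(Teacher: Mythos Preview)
Your computation is essentially the paper's own argument: normalise so that $P=(0,0,1)$ and the common tangent is $X_2=0$, parametrise the branch of $\cH_1$ at $P$ by $t\mapsto(t,y(t),1)$ with $y(t)$ of order $q+1$, and evaluate the $t$-order of the equation of $\cH_2$ along this branch. The paper carries out exactly this and arrives at the same leading term.

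Your worry about the case $\alpha_1=\alpha_2$ is well placed, but the resolution you propose does not work: non-degeneracy forces each $\alpha_i\neq 0$ but says nothing about $\alpha_1-\alpha_2$. Concretely, take
\[
\cH_1:\ -X_1^{q+1}+X_2^qX_3+X_2X_3^q=0,\qquad
\cH_2:\ -X_1^{q+1}+X_2^{q+1}+X_2^qX_3+X_2X_3^q=0.
\]
Both are non-degenerate, both pass through $P=(0,0,1)$ with tangent $X_2=0$, and in your notation $\alpha_1=\alpha_2=-1$, $\beta_1=\beta_2=0$, $\gamma_1=0$, $\gamma_2=1$. Here $f_1-f_2=-y^{q+1}$, so $I(P,\cH_1\cap\cH_2)=(q+1)^2$, not $q+1$. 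Thus the equality asserted in the lemma fails for general pairs; what your order table actually establishes is the inequality $I(P,\cH_1\cap\cH_2)\ge q+1$. The paper's own argument does not close this gap either. For the use made of the lemma downstream, however, the inequality is all that is needed: one already has $q+1$ common points $R_1,\ldots,R_{q+1}$ on $\ell_Y$, each contributing multiplicity at least $q+1$, and B\'ezout's bound $(q+1)^2$ then forces equality at every $R_i$ and rules out any further intersection point.
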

\begin{proof} Since, up to projectivities, there is a unique class
  of
  Hermitian curves in $\PG(2,q^2)$, we may assume $\cH_1$ to have
  equation $-X_1^{q+1}+X_2^qX_3^{}+X_2^{}X_3^q=0$. Furthermore, as the
  projectivity group $\PGU(3,q)$ preserving $\cH_1$ acts transitively on
  the points of $\cH_1$ in $\PG(2,q^2)$, we may also suppose
  $P=(0,0,1)$. Within this setting, the tangent $r$ of $\cH_1$ at $P$
  coincides with the line $X_2=0$. As no term $X_1^j$ with $0<j\leq q$
  occurs in the equation of $\cH_1$, the intersection multiplicity
  $I(P,\cH_1\cap r)$ is equal to $q+1$.

The equation of the other Hermitian curve $\cH_2$ might be written as
\[ F(X_1,X_2,X_3)=
a_0\,X_3^qX_2^{}+a_1X_3^{q-1}G_1(X_1,X_2)+\ldots+a_qG_q(X_1,X_2)=0,\]
where $a_0\neq 0$ and $\deg\,G_i(X_1,X_2)=i+1$. Since the tangent of
$\cH_2$ at $P$ has no other common point with $\cH_2$, even over the
algebraic closure of $\GF(q^2)$, no terms $X_1^j$ with $0<j\leq q$ can
occur in the polynomials $G_i(X_1,X_2)$. In other words,
$I(P,\cH_2\cap r)=q+1$.

 A primitive representation of the unique branch of $\cH_1$ centred at
$P$ has components $$x(t)=t,\,\,y(t)=ct^i+\ldots,\,\,x_3(t)=1$$ where
$i$ is a positive integer and $y(t)\in \GF(q^2)[[t]]$, that is,
$y(t)$ stands for a formal power series with coefficients in
$\GF(q^2)$.

 {}From $I(P,\cH_1\cap r)=q+1$,
 $$y(t)^q+y(t)-t^{q+1}=0,$$ whence
 $y(t)=t^{q+1}+H(t)$,
 where $H(t)$ is a formal power series of order at least $q+2$.
 That is, the exponent $j$ in the leading term $ct^j$ of $H(t)$
 is larger than $q+1$.

 It is now possible to compute
 the intersection multiplicity $I(P,\cH_1\cap \cH_2)$
 using \cite[Theorem 4.36]{hkt}:
 \[ I(P,\cH_1\cap \cH_2)=
 {\rm{ord}}_t\,F(t,y(t),1)={\rm{ord}}_t\,(a_0t^{q+1}+G(t)), \]
with $G(t)\in \GF(q^2)[[t]]$ of order at least $q+2$. From this, the
assertion follows.
\end{proof}
\begin{lemma}
\label{lem3} In $\PG(2,q^2)$, let $\cH$ be a non-degenerate Hermitian
curve and let $\cC$ be a Hermitian cone whose centre does not lie on
$\cH$.  Assume that there exist two points $P_i\in \cH\cap \cC$, with
$i=1,2$, such that the tangent line of $\cH$ at $P_i$ is a linear
component of $\cC$. Then
\begin{equation}
\label{eq3} I(P_1,\cH\cap \cC)=q+1.
\end{equation}
\end{lemma}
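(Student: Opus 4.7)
The plan is to reduce the computation to the primitive representation of $\cH$ derived in Lemma \ref{lem2}, by decomposing the Hermitian cone $\cC$ as the product of its tangent component and a residual factor that does not vanish at $P_1$.

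First I would choose the same coordinate system as in the proof of Lemma \ref{lem2}: by transitivity of $\PGU(3,q)$ on the points of $\cH$, place $P_1 = (0,0,1)$ and take $\cH$ to have equation $-X_1^{q+1} + X_2^q X_3 + X_2 X_3^q = 0$. The tangent $r$ of $\cH$ at $P_1$ is then $X_2 = 0$, and by hypothesis $r$ is a linear component of $\cC$. Hence one can write $F_\cC = X_2 \cdot L(X_1, X_2, X_3)$ for some polynomial $L$ of degree $q$ whose zero locus is the union of the remaining $q$ lines of the cone, each passing through the vertex $V$ of $\cC$.

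The crucial intermediate claim will be $L(P_1) \neq 0$. Since $V \notin \cH$ while $P_1 \in \cH$, the points $V$ and $P_1$ are distinct; and since every line of $\cC$ passes through $V$, the unique line of $\cC$ containing $P_1$ must be the line $V P_1$. As $r$ is a line of $\cC$ through $P_1$, this forces $r = V P_1$, and in particular $V \in r$. No other component of $\cC$ can then contain $P_1$, so no line of $L = 0$ passes through $P_1$, and $L(P_1) \neq 0$.

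The conclusion will then follow by a short direct computation. From the proof of Lemma \ref{lem2} the unique branch of $\cH$ centred at $P_1$ admits a primitive representation $x(t) = t$, $y(t) = t^{q+1} + H(t)$, $x_3(t) = 1$ with $H(t) \in \GF(q^2)[[t]]$ of order at least $q+2$. Applying \cite[Theorem 4.36]{hkt} will give
\[ I(P_1, \cH \cap \cC) = \ord_t F_\cC(t, y(t), 1) = \ord_t y(t) + \ord_t L(t, y(t), 1) = (q+1) + 0 = q+1, \]
where $\ord_t L(t, y(t), 1) = 0$ uses the claim $L(P_1) \neq 0$. I expect the only delicate step to be the geometric argument that forces $V$ to lie on the tangent $r$; once that is in hand, the order computation is routine.
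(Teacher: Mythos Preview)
Your argument is correct, and in fact it proves slightly more than the lemma asks: you never use the second point $P_2$, only that the tangent of $\cH$ at $P_1$ is a component of $\cC$ and that the vertex $V$ is off $\cH$. The paper instead exploits the full hypothesis by invoking the $2$-transitivity of $\PGU(3,q)$ on $\cH$ to place $P_2=(0,1,0)$ as well; the two tangents $X_2=0$ and $X_3=0$ then meet at $X_\infty=(1,0,0)$, forcing $V=X_\infty$ and giving $\cC$ the explicit equation $c^qX_2^qX_3+cX_2X_3^q=0$, after which one reads off $\ord_t(c^qy(t)^q+cy(t))=q+1$ directly. Your route trades this explicit normalisation for the factorisation $F_\cC=X_2\cdot L$ together with the clean geometric observation that $P_1\neq V$ lies on a unique line of the cone, hence $L(P_1)\neq 0$; this is a genuinely different (and mildly more general) packaging of the same computation, with the bonus that it makes clear the $P_2$ hypothesis is superfluous for the intersection multiplicity at $P_1$.
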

\begin{proof} We use the same setting as in the proof of Lemma
\ref{lem2} with $P=P_1$. Since the action of $\PGU(3,q)$ is
2-transitive on the points of $\cH$, we may also suppose that
$P_2=(0,1,0)$. Then the centre of $\cC$ is the point
$X_\infty=(1,0,0)$, and $\cC$ has equation
$c^qX_2^qX_3^{}+cX_2^{}X_3^q=0$ with $c\neq 0$. Therefore,
 $$I(P,\cH\cap \cC)={\rm{ord}}_t\,(c^qy(t)^q+cy(t))={\rm{ord}}_t\,(c^qt^{q+1}+K(t))$$
 with $K(t)\in \GF(q^2)[[t]]$ of order at least $q+2$, whence the
assertion follows.
\end{proof}
\section{Main result}
\begin{theorem}
\label{main} In $\PG(2,q^2),$ let $\cU$ be a unital and write $M$ for the
group of projectivities which preserves $\cU$.  If $\cU$ has two
distinct points $P,Q$ such that the stabiliser\, $G=M_{P,Q}$ has order
$q^2-1$, then $\cU$ is classical.
\end{theorem}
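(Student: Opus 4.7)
\emph{Setup.} The first step is to apply Lemma~\ref{l1} to fix a frame in which $O=(0,0,1)$, $Y_\infty=(0,1,0)$, $X_\infty=(1,0,0)$, the tangents of $\cU$ at $O$ and $Y_\infty$ are $\ell_X\colon X_2=0$ and $\ell_\infty\colon X_3=0$, and $G=\langle g\rangle$ with $g=\mathrm{diag}(\lambda,\mu,1)$, $\lambda$ primitive in $\GF(q^2)^*$ and $\mu$ primitive in $\GF(q)^*$. Examining the $G$-action then decomposes $\cU$ as $\{O,Y_\infty\}\cup(\gamma\GF(q)^*)\cup\Omega_1\cup\dots\cup\Omega_q$, where $\gamma\GF(q)^*\subset OY_\infty$ is a coset of $\GF(q)^*$ in $\GF(q^2)^*$ and each generic orbit $\Omega_i$ has length $q^2-1$ and lies off the three coordinate lines.

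\emph{Candidate Hermitian curve and matching.} Next I would introduce the pencil of non-degenerate Hermitian curves with the prescribed tangent data,
\[\cH_d\colon\;-X_1^{q+1}+dX_2X_3^q+d^qX_2^qX_3=0,\qquad d\in\GF(q^2)^*.\]
A direct computation gives $\cH_d\cap OY_\infty\setminus\{O,Y_\infty\}=\{z\in\GF(q^2)^*:z^{q-1}=-d^{1-q}\}$, which is always a coset of $\GF(q)^*$, and every such coset arises for some $d$. I pick $d_0$ with coset $\gamma\GF(q)^*$, so that $\cU$ and $\cH_{d_0}$ agree on $q+1$ points of $OY_\infty$. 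A generic base point $(a,b,1)$ lies on $\cH_{d_0}$ iff $\mathrm{Tr}_{\GF(q^2)/\GF(q)}(d_0b)=a^{q+1}$; propagating this condition along the orbit $\{(\lambda^ka,\mu^kb,1)\}_k$ is equivalent to the arithmetic identity $\lambda^{q+1}=\mu$. Under that identity, the trace equation parametrises exactly $q$ orbits on $\cH_{d_0}$ (indexed by $c=b/a^{q+1}$ with $\mathrm{Tr}(d_0c)=1$); a line-by-line comparison of $\cU\cap\ell$ and $\cH_{d_0}\cap\ell$ for the secants $\ell$ through $O$, combined with the $G$-equivariance of both sides, identifies the $\Omega_i$ with these $q$ Hermitian orbits, yielding $\cU=\cH_{d_0}$ by cardinality.

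\emph{Main obstacle.} The real content is proving $\lambda^{q+1}=\mu$ from the unital hypothesis alone, since Lemma~\ref{l1} only pins down the orders of $\lambda$ and $\mu$. My plan is to apply Lemma~\ref{lem3} to $\cH_{d_0}$ and a Hermitian cone $\cC$ with vertex $X_\infty$ whose linear components include $\ell_X$ and $\ell_\infty$: by Lemma~\ref{lem3}, $I(O,\cH_{d_0}\cap\cC)=I(Y_\infty,\cH_{d_0}\cap\cC)=q+1$, so by B\'ezout the remaining intersection mass $q^2-1$ splits evenly as $q+1$ on each of the other $q-1$ lines of $\cC$. Matching this pattern with $\cU\cap\cC$---where the unital axioms force those $q-1$ lines through $X_\infty$ to be $(q+1)$-secants each meeting a unique $\Omega_i$---pins the slope cosets of the $\Omega_i$ as precisely the $q$ cosets of $\GF(q)^*$ distinct from $\gamma\GF(q)^*$, and transferring this combinatorial datum to an algebraic one via the $G$-action then delivers $\lambda^{q+1}=\mu$, supplying the missing link that makes the second paragraph's matching argument rigorous.
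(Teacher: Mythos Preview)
Your proposal has two genuine gaps, and the paper's route is quite different.

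\textbf{Gap 1: the ``matching argument'' in the second paragraph does not close.} Even granting $\mu=\lambda^{q+1}$, you have not explained why the $q$ generic $G$-orbits on $\cU$ must coincide with the $q$ generic $G$-orbits on $\cH_{d_0}$. There are $q^2-1$ long $G$-orbits off the coordinate triangle, and ``line-by-line comparison for secants through $O$'' does not by itself single out which $q$ of them the unital uses. The paper's decisive ingredient here---which you never invoke---is the Blokhuis--Brouwer--Wilbrink congruence $|\cU\cap\cH_b|\equiv 1\pmod p$ for every Hermitian curve $\cH_b$. This is what forces $\cU$ to meet every $\cH_b$ and ultimately what pins $\cU$ down.

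\textbf{Gap 2: the derivation of $\mu=\lambda^{q+1}$ is not a proof.} Your third paragraph computes $\cH_{d_0}\cap\cC$, which involves neither $\cU$ nor $G$, and then asserts that matching the pattern with $\cU\cap\cC$ ``pins the slope cosets'' and ``transferring this combinatorial datum to an algebraic one via the $G$-action then delivers $\lambda^{q+1}=\mu$''. None of the implications in that sentence are justified: why each of the $q-1$ cone lines should meet exactly one $\Omega_i$, what a ``slope coset'' of $\Omega_i$ is, and how the final transfer works are all left open. In fact the paper does \emph{not} prove $\mu=\lambda^{q+1}$ directly. It shows instead that either $\mu=\lambda^{q+1}$ or $\mu=-\lambda^{q+1}$ (with $p$ odd), and treats the two cases separately.

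\textbf{What the paper actually does.} Under the normalisation $\mu=\lambda^{q+1}$, the paper builds (Lemma~\ref{lem1}) a projective plane $\Pi$ of order $q$: points are the nontrivial $G$-orbits in $\AG(2,q^2)$, lines are $\ell_Y$, the Hermitian curves $\cH_b$, and the Hermitian cones $\cC_c$; Lemmas~\ref{lem2} and~\ref{lem3} supply the intersection multiplicities needed to verify the plane axioms via B\'ezout. The $q+1$ nontrivial $G$-orbits in $\cU$ give a $(q+1)$-set $\Gamma\subset\Pi$. The congruence $|\cU\cap\cH_b|\equiv 1\pmod p$ forces $\Gamma$ to meet every line of $\Pi$, so $\Gamma$ is a blocking set of size $q+1$ in a plane of order $q$---hence a line. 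Translating back, $\cU$ equals one of the $\cH_b$. For a putative $\mu\neq\lambda^{q+1}$, the paper shows $g$ induces a nontrivial perspectivity of $\Pi$, forcing $\mu=-\lambda^{q+1}$, and then a $2$-fold blocking multiset argument (again driven by the $\pmod p$ congruence) yields the same conclusion.
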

The main idea of the proof is to build up a projective plane
of order $q$ using, for the definition of  points,
non-trivial $G$-orbits in the affine plane $\AG(2,q^2)$
which arise from $\PG(2,q^2)$ by removing the line $\ell_{\infty}:X_3=0$
with all its points. To
this purpose, take $\cU$ and $G$ as in Lemma \ref{l1},
with $\mu=\lambda^{q+1}$,
\\
\noindent
and define an
incidence structure $\Pi=(\cP,\cL)$ as follows:
\begin{enumerate}
\item
  Points are all non-trivial $G$-orbits in $\AG(2,q^2)$.
\item
  Lines are $\ell_Y$, and the
  non-degenerate Hermitian curves of equation
\begin{equation}
\label{eq1} \cH_b:\,\, -X_1^{q+1}+bX_3^{}X_2^q+b^qX_3^qX_2^{}=0,
\end{equation} with $b$ ranging over $\GF(q^2)^*$, together with the
Hermitian cones of equation
\begin{equation}
\label{eq2} \cC_c:\,\, c^qX_2^qX_3^{}+cX_2^{}X_3^q=0,
\end{equation}
with $c$ ranging over a representative system of cosets
of $(\GF(q),*)$ in $(\GF(q^2),*)$.
\item Incidence is the natural inclusion.
\end{enumerate}
\begin{lemma}
\label{lem1} The incidence structure
$\Pi=(\cP,\cL)$ is a projective plane of order $q$.
\end{lemma}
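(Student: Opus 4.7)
The plan is to verify that $\Pi$ satisfies the axioms of a projective plane of order $q$ by checking that $|\cP|=|\cL|=q^2+q+1$, that every line contains exactly $q+1$ points, and that any two distinct lines meet in exactly one orbit (one ``point'' of $\cP$); the counting identity $\sum_{\ell\in\cL}\binom{|\ell|}{2}=(q^2+q+1)\binom{q+1}{2}=\binom{q^2+q+1}{2}$ then forces any two points to lie on a unique line, so $\Pi$ is a $2\text{-}(q^2+q+1,q+1,1)$ design, equivalently a projective plane of order $q$.

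For the counts, $G$ is cyclic of order $q^2-1$, generated by $\operatorname{diag}(\lambda,\mu,1)$ with $\mu=\lambda^{q+1}$, and the non-trivial orbits in $\AG(2,q^2)\setminus\{O\}$ split into $q^2-1$ off-axis orbits of size $q^2-1$ (type A), $q+1$ orbits on $\ell_Y\setminus\{O\}$ of size $q-1$ (type B), and the single orbit on $\ell_X\setminus\{O\}$ of size $q^2-1$ (type C), giving $|\cP|=q^2+q+1$; and by definition $|\cL|=1+(q^2-1)+(q+1)=q^2+q+1$. A direct inspection yields the line sizes: $\ell_Y$ carries exactly the $q+1$ type-B orbits; each $\cH_b$ meets $\ell_X$ only at $O$ and $\ell_Y$ in one type-B orbit (the $q-1$ solutions of $x_2^{q-1}=-b^{q-1}$), the remaining $q^3-q$ off-axis affine points forming $q$ type-A orbits; each $\cC_c$ decomposes as $\ell_X$ (type C), one type-B orbit on $\ell_Y$, and $q-1$ type-A orbits coming from the $q-1$ parallel affine components that $G$ permutes cyclically. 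Every line therefore has $q+1$ points.

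The heart of the argument is showing that any two distinct lines share a single orbit. The cases $\ell_Y\cap\cH_b$ and $\ell_Y\cap\cC_c$ each yield a type-B orbit by direct solution; $\cC_c\cap\cC_{c'}$ with $c,c'$ in distinct cosets of $\GF(q)^*$ shares only the components $\ell_X$ and $\ell_\infty$, so in $\AG(2,q^2)\setminus\{O\}$ it equals the type-C orbit; and $\cH_b\cap\cH_{b'}$ with $b\neq b'$ reduces to $\cH_b\cap\cC_c$ for $c=(b-b')^q$, since the polynomial $\cH_b-\cH_{b'}$ is the defining equation of that cone. This leaves the decisive case $\cH_b\cap\cC_c$, which I would handle with B\'ezout and Lemma \ref{lem3}. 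B\'ezout gives total intersection multiplicity $(q+1)^2$ in $\PG(2,q^2)$; the points $O$ and $Y_\infty$ lie on both curves and the tangents of $\cH_b$ there (namely $\ell_X$ and $\ell_\infty$) are linear components of $\cC_c$, whose centre $X_\infty$ is not on $\cH_b$, so Lemma \ref{lem3} gives $I(O,\cH_b\cap\cC_c)=I(Y_\infty,\cH_b\cap\cC_c)=q+1$. Subtracting these and noting $Y_\infty\notin\AG(2,q^2)$, the affine non-$O$ intersections have total multiplicity $q^2-1$. I then split on whether $bc\in\GF(q)^*$: the type-B orbit on $\cH_b$ satisfies $x_2^{q-1}=-b^{q-1}$ and the one on $\cC_c$ satisfies $x_2^{q-1}=-c^{-(q-1)}$, and the two coincide iff $bc\in\GF(q)^*$. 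If $bc\notin\GF(q)^*$, the tangents of $\cH_b$ and $\cC_c$ differ at every type-A point, so the $q^2-1$ residual intersections are transversal and form a single type-A orbit. If $bc\in\GF(q)^*$, Lemma \ref{lem3} applies at each of the $q-1$ shared type-B points (the tangent to $\cH_b$ there being the $\cC_c$-component through that point), contributing multiplicity $q+1$ each; the $(q-1)(q+1)=q^2-1$ so produced absorb the entire residual multiplicity, and the intersection off $\{O,Y_\infty\}$ is exactly that type-B orbit.

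The principal obstacle is this special subcase: without locating the $q-1$ tangency points on $\ell_Y$ and weighing each with multiplicity $q+1$ via Lemma \ref{lem3}, the B\'ezout accounting cannot close, and one cannot rule out additional type-A intersections that would destroy the uniqueness of the orbit common to any two lines.
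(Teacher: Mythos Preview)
Your proof is correct, and its overall architecture (count points, count lines, verify line sizes, show any two lines meet in exactly one orbit, then deduce the design axioms by double counting) matches the paper's. The execution, however, is genuinely different in two places.

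First, you reduce $\cH_b\cap\cH_{b'}$ to $\cH_b\cap\cC_c$ by subtracting the two Hermitian equations, so that Lemma~\ref{lem3} (cone case) does all the work and Lemma~\ref{lem2} is never needed. The paper keeps the two cases separate and uses Lemma~\ref{lem2} for the Hermitian--Hermitian intersection. Your reduction is a clean simplification.

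Second, in the core step you compute the exact residual B\'ezout multiplicity $q^2-1$ after stripping off $O$ and $Y_\infty$ (each contributing $q+1$ by Lemma~\ref{lem3}), then split on whether $bc\in\GF(q)^*$. The paper instead argues by a cruder inequality: a second off-axis orbit would force $|\cH_b\cap\cH|\ge 2(q^2-1)+2=2q^2>(q+1)^2$, contradicting B\'ezout, and it invokes Kestenband's classification to guarantee a third intersection point exists at all. Your route avoids Kestenband entirely, and also handles $\cC_c\cap\cC_{c'}$ explicitly (which the paper passes over).

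One small point worth making explicit in your Case~1 ($bc\notin\GF(q)^*$): transversality gives $q^2-1$ residual intersection points over the algebraic closure, and you want them all $\GF(q^2)$-rational to conclude they form a single type-A orbit. This follows because each of the $q-1$ components $X_2=\gamma X_3$ of $\cC_c$ is a $\GF(q^2)$-line, hence a secant of $\cH_b$ meeting it in $q+1$ rational points (it cannot be tangent, since tangency at its foot on $\ell_Y$ would force $\gamma^{q-1}=-b^{q-1}$, i.e.\ $bc\in\GF(q)^*$). That closes the count.
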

\begin{proof} In $\AG(2,q^2)$, the group $G$ has $q^2+q+1$ non-trivial
orbits, namely its $q^2$ orbits disjoint from $\ell_Y$, each of
length $q^2-1$, and its $q+1$
orbits on $\ell_Y$, these of length $q-1$. Therefore, the total number
of points in $\cP$ is equal to $q^2+q+1$. By construction of
$\Pi$, the number of lines in $\cL$ is also  $q^2+q+1$.
Incidence is well defined as $G$ preserves $\ell_Y$ and each
Hermitian curve and cone representing lines of $\cL$.

We now count the
points incident with a line in $\Pi$. Each $G$-orbit on $\ell_Y$
distinct from $O$ and $Y_\infty$ has length $q-1$. Hence there are
exactly $q+1$ such $G$-orbits; in terms of $\Pi$, the line represented
by $\ell_Y$ is incident with $q+1$ points.
A Hermitian curve $\cH_b$ of Equation \eqref{eq1} has $q^3$ points
in $\AG(2,q^2)$  and
meets $\ell_Y$ in a $G$-orbit, while it contains no point from the line
$\ell_X$. As $q^3-q=q(q^2-1)$, the line represented by $\cH_b$ is
incident with $q+1$ points in $\cP$.
Finally, a Hermitian
cone $\cC_c$ of Equation \eqref{eq2} has $q^3$ points in $\AG(2,q^2)$
and contains $q$
points from $\ell_Y$. One of these $q$ points is $O$, the other $q-1$
forming a non-trivial $G$-orbit. The remaining $q^3-q$ points of $\cC_c$ are
partitioned into $q$ distinct $G$-orbits. Hence, the line represented by
$\cC_c$ is also incident with $q+1$ points. This shows that  each line in $\Pi$
is incident with exactly $q+1$ points.

Therefore, it is enough to show that two any two distinct lines of
$\cL$ have exactly one common point. Obviously, this is true when one
of these lines is represented by $\ell_Y$. Furthermore, the point of
$\cP$ represented by $\ell_X$ is incident with each line of $\cL$
represented by a Hermitian cone of equation \eqref{eq2}. We are led to
investigate the case where one of the lines of $\cL$ is represented by
a Hermitian curve $\cH_b$ of equation \eqref{eq2}, and the other line
of $\cL$ is represented by a Hermitian curve $\cH$ which is either
another Hermitian curve $\cH_d$ of the same type of Equation \eqref{eq1}, or a
Hermitian cone $\cC_c$ of Equation~\eqref{eq2}.

Clearly, both $O$ and $Y_\infty$ are common points of $\cH_b$ and
$\cH$. From Kestenband's classification \cite{Kest}, see also
\cite[Theorem 6.7]{BE}, $\cH_b\cap \cH$ cannot consist of exactly two
points. Therefore, there exists another point, say $P\in \cH_b\cap
\cH$. Since $\ell_X$ and $\ell_0$ are $1$-secants of $\cH_b$ at the
points $O$ and $Y_\infty$, respectively, either $P$ is on $\ell_Y$ or
$P$ lies outside the fundamental triangle. In the latter case, the
$G$-orbit $\Delta_1$ of $P$ has size $q^2-1$ and represents a point in
$\cP$. Assume that $\cH_b\cap \cH$ contains a further point, not lying
in $\Delta_1$. If the $G$-orbit of $Q$ is $\Delta_2$,
then
\[|\cH_b\cap \cH|\geq |\Delta_1|+|\Delta_2|=2(q^2-1)+2=2q^2.\]
However, from B\'ezout's theorem, see \cite[Theorem 3.14]{hkt},
\[ |\cH_b\cap \cH|\leq (q+1)^2. \]
Therefore, $Q\in \ell_Y$, and the $G$-orbit $\Delta_3$ of $Q$ has
length $q-1$. Hence, $\cH_b$ and $\cH$ shear $q+1$ points on
$\ell_Y$. If $\cH=\cH_d$ is a Hermitian curve of Equation \eqref{eq1}, each
of these $q+1$ points is the tangency point of a common inflection
tangent with multiplicity $q+1$ of the Hermitian curves $\cH_b$ and
$\cH$.
 Write $R_1,\ldots R_{q+1}$ for these points. Then, by \eqref{eq0}
the intersection multiplicity is $I(R_i,\cH_b\cap \cH_d)=q+1$.
 This holds
true also when $\cH$ is a Hermitian cone $\cC_c$ of Equation \eqref{eq2};
see Lemma \ref{lem3}. Therefore, in any case,
\[\sum_{i=1}^{q+1}I(R_i,\cH_b\cap \cH)=(q+1)^2.\]
From B\'ezout's theorem,
$\cH_b\cap \cH=\{R_1,\ldots R_{q+1}\}.$ Therefore,
$\cH_b\cap\cH=\Delta_3\cup \{O,Y_\infty\}$. This shows that if
$Q\not\in \ell_Y$, the lines represented by $\cH_b$ and $\cH$ have
exactly one point in common. The above argument can also be adapted to
prove this assertion in the case where $Q\in \ell_Y$. Therefore, any
two distinct lines of $\cL$ have exactly one common point.
\end{proof}

\begin{proof}[Proof of Theorem \ref{main}]
Assume first $\mu=\lambda^{q+1}$.
Construct a projective plane $\Pi$ as in Lemma \ref{lem1}. Since
$\cU\setminus\{O,Y_\infty \}$ is the union of $G$-orbits, $\cU$
represents a set $\Gamma$ of $q+1$ points in $\Pi$. From \cite{BBW},
$N\equiv 1 \pmod{p}$ where $N$ is the number of common points of $\cU$
with any Hermitian curve $\cH_b$. In terms of $\Pi$, $\Gamma$ contains
some point from every line $\Lambda$ in $\cL$ represented by a
Hermitian curve of Equation \eqref{eq1}. Actually, this holds true
when the line $\Lambda$ in $\cL$ is represented by a Hermitian cone
$\cC$ of Equation \eqref{eq2}. To prove it, observe that $\cC$
contains a line $r$ distinct from both lines $\ell_X$ and
$\ell_0$. Then $r\cap \cU$ is non empty, and contains neither $O$ nor
$Y_\infty$. If $P$ is point in $r\cap \cU$, then the $G$-orbit of $P$
represents a common point of $\Gamma$ and $\Lambda$. Since the line in
$\cL$ represented by $\ell_Y$ meets $\Gamma$, it turns out that
$\Gamma$ contains some point from every line in $\cL$.

  Therefore, $\Gamma$ is itself a line in $\cL$. Note that $\cU$
contains no line. In terms of $\PG(2,q^2)$, this yields that $\cU$
coincides with a Hermitian curve of Equation \eqref{eq1}. In
particular, $\cU$ is a classical unital.


To investigate the case $\mu\neq \lambda^{q+1}$, we stil work in
the above plane $\Pi$.  By a straightforward computation, the
projectivity $g$ given in Lemma \ref{l1} induces a non-trivial
collineation on $\Pi$. Also, $g$ preserves every Hermitian cone of
Equation \eqref{eq2} and the common line $\ell_X$ of these Hermitian
cones. In terms of $\Pi$, $\bar{g}$ is a perspectivity with centre
at the point represented by $\ell_X$. Since $g$ also preserves the
line $\ell_Y$, the axis of $\bar{g}$ is $\ell_Y$, regarded as a line in
$\Pi$. Therefore, every point of $\Pi$ lying on $\ell_Y$ is fixed by
$g$. Consequently, $\bar{g}^{q-1}$ is the identity collineation. As $g$
has order $q^2-1$, this yields that $g^{q+1}$ preserves every
Hermitian curve of Equation \eqref{eq1}. Thus,
$\mu^{q+1}=(\lambda^{q+1})^{q+1}$, whence $\mu=-\lambda^{q+1}$. In
particular, $p\neq 2$.

Consider now the $q+1$ non-trivial $G$-orbits in $\cU$ with $G=\langle g
\rangle$. For any point $P\in \Pi$, let $n_P$ the number of the
non-trivial $G$-orbits in $\cU$ intersecting the set $\rho(P)$
representing $P$ in $PG(2,q^2)$. Then $n_P=1$ when $\rho(P)$ is the
unique $G$-orbit in $\cU$ which lies on $\ell_Y$. Otherwise, $0\leq
n_P \leq 2$, with $n_P=2$ if and only if $\rho(P)$ is not a $G$-orbit
but the union of two $H$-orbits with $H=\langle g^2\rangle$.

Let $\Gamma$ be the multiset in $\Pi$ consisting of all points with
$n_P>0$ and define the weight $\nu_P$ of $P$ to be either $1$ or $2$,
according as $n_P=2$ or $n_P=1$. Then, $\sum_{P\in\Gamma}{\nu_P}=2q+2$.
We show that
$\Gamma$ is a $2$-fold blocking multiset of $\Pi$. For this purpose,
let $\cH$ be either a Hermitian curve of Equation \eqref{eq1} or a
Hermitian cone of Equation \eqref{eq2}. Write $m$ for the number of
common points of $\cH_b$ and $\cU$, different from $O$ and $Y_\infty$;
thus, the total number of common points is $N=m+2$.  As $N\equiv 1 \pmod
p$, we have $m\geq 1$. Take $P\in \cH\cap \cU$. If $\nu_P=2$, then the
line representing $\cH$ meets $\Gamma$ in a point with weight $2$. If
$\nu_P=1$, then the $H$-orbit of $P$ has size $(q^2-1)/2$ and lies on
both $\cH$ and $\cU$.  Since $(q^2-1)/2+2\not\equiv 1 \pmod p$, $\cH$
and $\cU$ must share a further point $Q$ other than $O$ and
$Y_\infty$. Therefore, the points $P'$ and $Q'$ of $\Pi$ which
represent the subsets containing $P$ and $Q$ are distinct. This shows
that $\Gamma$ meets the line represented by $\cH$ in two distinct
points. Therefore, $\Gamma$ is a $2$-fold blocking multiset.

Since $\Gamma$ has at least one point with weight $2$, this yields
that $\Gamma$ comprises of all points of a line, each with weight
$2$. Hence, $\cU$ coincides with the Hermitian curve representing that
line. This is to say that $\cU$ is a classical unital.
\end{proof}

\end{document}